\newcommand{\sK}{\mathcal{K}}
\DeclareMathOperator{\range}{range}
\DeclareMathOperator{\rank}{rank}
\newtheorem{theorem}{Theorem}
\newtheorem{lemma}[theorem]{Lemma}
\newtheorem{corollary}[theorem]{Corollary}
\newcommand*{\overtabline}{%
	\noalign{%
		\vskip-.5\dimexpr\ht\@arstrutbox+\dp\@arstrutbox\relax
		\vskip-.2pt\relax
		\hrule
		\vskip-.2pt\relax
		\vskip+.5\dimexpr\ht\@arstrutbox+\dp\@arstrutbox\relax
	}%
}
\begin{document}
	\author{Susanne Bradley\thanks{Department of Computer Science, The University of British Columbia, Vancouver, Canada V6T 1Z4
			({smbrad@cs.ubc.ca}, {greif@cs.ubc.ca}).}
		\and Chen Greif\footnotemark[2]}
	\title{Eigenvalue Bounds for Saddle-Point Systems with Singular Leading Blocks}
	
	\maketitle
	
	\begin{abstract}
		We derive bounds on the eigenvalues of saddle-point matrices with singular leading blocks. The technique of proof is based on augmentation. Our bounds depend on the principal angles between the ranges or kernels of the matrix blocks. Numerical experiments validate our analytical findings.
	\end{abstract}
	
	
	\section{Introduction}
	\label{sec:intro}
	Consider the saddle-point system
	\begin{equation}
		\label{eq:sp-system}
		\begin{bmatrix}
			A & B^T \\
			B & 0
		\end{bmatrix}
		\begin{bmatrix}
			x \\
			y
		\end{bmatrix} =
		\begin{bmatrix}
			f \\
			g
		\end{bmatrix},
	\end{equation}
	where $A \in \mathbb{R}^{n \times n}$ is symmetric positive semidefinite and $B \in \mathbb{R}^{m \times n}$ has full row rank, with $m < n$. We denote the coefficient matrix by
	\begin{equation}
		\label{eq:defK}
		\sK = \begin{bmatrix}
			A & B^T \\
			B & 0
		\end{bmatrix}.
	\end{equation}
	We assume throughout that $\sK$ is invertible. Our goal in this paper is to derive eigenvalue bounds for $\sK$ under the assumption that $A$ is singular.
	
		\paragraph{Contribution of this paper} We derive a nonzero bound on the positive eigenvalues of $\sK$ that does not require invertibility of $A$, by considering the principal angles between the ranges/kernels of $A$ and $B$. Ruiz et al. \cite{rst18} also developed a lower positive eigenvalue bound using principal angles, but their analysis assumes a positive definite $A$.

	\paragraph{Notation} Our analysis will rely on the eigenvalues and singular values of $A$ and $B$, as well as some other matrices we will introduce later in the text. We will denote the eigenvalues of a matrix $M \in \mathbb{R}^{n \times n}$ by $$ \mu_i(M), \quad i=1,\dots,n,$$ and in terms of ordering we will assume that $$ \mu_1(M) \geq \mu_2(M) \geq \cdots \geq \mu_n(M).$$ We follow the same convention for singular values of a rectangular matrix $N$, but we use $\sigma$ rather than $\mu$: i.e., the the singular values of $N \in \mathbb{R}^{m \times n}$ are denoted by $$ \sigma_1(N) \geq \sigma_2(N) \geq \cdots \geq \mu_m(N) \ge 0.$$ To increase clarity, we will often refer to the maximal eigenvalues/singular values $\mu_1(M)$ and $\sigma_1(N)$ by $\mu_{\max}(M)$ and $\sigma_{\max}(M)$ respectively. Similarly, we will refer to the minimal values $\mu_n(M)$, $\sigma_m(N)$ by $\mu_{\min}(M)$ and $\sigma_{\min}(M)$. The positive eigenvalues of a matrix will be denoted by a ``+'' superscript -- for instance, we denote the smallest nonzero eigenvalue of a semidefinite matrix $M$ by $\mu_{\min}^+(M)$. For simplicity, we will omit the arguments to $\mu$ and $\sigma$ when we refer to the eigenvalues and singular values of $A$ and $B$. That is, we let
	\begin{alignat*}{2}
		\mu_{\max} &= \mu_{\max}(A); \ \ \sigma_{\max} &= \sigma_{\max}(B);\\
		\mu_{\min} &= \mu_{\min}(A); \ \ \sigma_{\min} &= \sigma_{\min}(B);\\
		& \ \ \ \ \ \mu_{\min}^+ = \mu_{\min}^+(A). &
	\end{alignat*}

	\paragraph{Outline}
 In Section \ref{sec:bnd_general} we discuss our general approach of augmenting the leading block of a saddle-point matrix to obtain a lower bound on the positive eigenvalues. In Section \ref{sec:bnd_gamma} we provide new bounds, which rely on the angles between the kernel of $A$ and $B$. We then present numerical experiments in Section \ref{sec:numex} and concluding remarks in Section \ref{sec:conclusions}.

	\section{Lower positive eigenvalue bounds using leading block augmentation}
	\label{sec:bnd_general}

To illustrate the challenge posed by the problem in hand, recall the following result of Rusten and Winther \cite[Lemma 2.1]{rw92}. In their analysis it is assumed that $A$ is positive definite (as opposed to semidefinite); however, the proof of this lemma does not rely on this, so the result still holds when $A$ is semidefinite.
	
	\begin{lemma}
		\label{lem:rw}
		Then, the eigenvalues of $\sK$ are bounded in the union of intervals
		$$
		I^{-} \ \cup \ I^{+},
		$$
		where
		$$
		I^{-} = \left[ \frac{1}{2}(\mu_{\min} - \sqrt{\mu_{\min}^2 + 4\sigma_{max}^2}), \frac{1}{2}(\mu_{\max} - \sqrt{\mu_{\max}^2+4\sigma_{\min}^2})\right]
		$$
		and
		$$ 
		I^{+} = \left[ \mu_{\min}, \frac{1}{2}(\mu_{\max} + \sqrt{\mu_{\max}^2 + 4\sigma_{\max}^2}) \right].
		$$
	\end{lemma}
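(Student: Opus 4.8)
The plan is to fix an eigenpair of $\sK$ and argue by the sign of the eigenvalue (with $\|\cdot\|$ denoting the Euclidean vector norm, resp.\ the spectral matrix norm). So let $\lambda$ be an eigenvalue with eigenvector $(x,y)\neq 0$, meaning $Ax+B^{T}y=\lambda x$ and $Bx=\lambda y$; since $\sK$ is invertible we may assume $\lambda\neq 0$. Two structural remarks clear the ground: $x\neq 0$ always, for if $x=0$ the first block row gives $B^{T}y=0$ and hence $y=0$ (as $B$ has full row rank), contradicting $(x,y)\neq 0$; and when $\lambda<0$ we also have $y\neq 0$, for if $y=0$ the first row reads $Ax=\lambda x$ with $x\neq 0$, impossible since $A$ is positive semidefinite and $\lambda<0$. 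It then remains to show $\lambda\in I^{+}$ when $\lambda>0$ and $\lambda\in I^{-}$ when $\lambda<0$.

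For $\lambda>0$ I would eliminate $y=\lambda^{-1}Bx$ using the second block row, substitute into the first, take the inner product with $x$ (normalized so $\|x\|=1$), and multiply through by $\lambda$; this gives the scalar identity $\lambda^{2}=\lambda\,x^{T}Ax+\|Bx\|^{2}$. Combining it with $\mu_{\min}\le x^{T}Ax\le\mu_{\max}$ and $0\le\|Bx\|^{2}\le\sigma_{\max}^{2}$, the lower estimates yield $\lambda^{2}\ge\lambda\mu_{\min}$, i.e.\ $\lambda\ge\mu_{\min}$, and the upper estimates yield $\lambda^{2}-\mu_{\max}\lambda-\sigma_{\max}^{2}\le 0$, i.e.\ $\lambda\le\tfrac12\bigl(\mu_{\max}+\sqrt{\mu_{\max}^{2}+4\sigma_{\max}^{2}}\bigr)$. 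Hence $\lambda\in I^{+}$.

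For $\lambda<0$ the right move is to eliminate $x$ instead. Since $A$ is positive semidefinite, all its eigenvalues are nonnegative, so $A-\lambda I$ is symmetric positive definite and in particular invertible \emph{even though $A$ itself may be singular}. The first block row then gives $x=-(A-\lambda I)^{-1}B^{T}y$, and inserting this into the second row shows that $-\lambda>0$ is an eigenvalue of the symmetric positive definite Schur complement $S_{\lambda}:=B(A-\lambda I)^{-1}B^{T}$. I would then bound the spectrum of $S_{\lambda}$: on the one hand $\mu_{\max}(S_{\lambda})\le\|B\|^{2}\,\|(A-\lambda I)^{-1}\|=\sigma_{\max}^{2}/(\mu_{\min}-\lambda)$; on the other hand, writing $z=B^{T}y$ and using that $B^{T}$ has full column rank so $\|z\|\ge\sigma_{\min}\|y\|$, one has $y^{T}S_{\lambda}y=z^{T}(A-\lambda I)^{-1}z\ge\|z\|^{2}/(\mu_{\max}-\lambda)\ge\sigma_{\min}^{2}\|y\|^{2}/(\mu_{\max}-\lambda)$, so $\mu_{\min}(S_{\lambda})\ge\sigma_{\min}^{2}/(\mu_{\max}-\lambda)$. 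Since $-\lambda$ lies between $\mu_{\min}(S_{\lambda})$ and $\mu_{\max}(S_{\lambda})$, we get $\sigma_{\min}^{2}/(\mu_{\max}-\lambda)\le-\lambda\le\sigma_{\max}^{2}/(\mu_{\min}-\lambda)$. Clearing the positive denominators turns the left inequality into $\lambda^{2}-\mu_{\max}\lambda-\sigma_{\min}^{2}\ge 0$ and the right into $\lambda^{2}-\mu_{\min}\lambda-\sigma_{\max}^{2}\le 0$; solving these two quadratics under the constraint $\lambda<0$ produces exactly $\tfrac12\bigl(\mu_{\min}-\sqrt{\mu_{\min}^{2}+4\sigma_{\max}^{2}}\bigr)\le\lambda\le\tfrac12\bigl(\mu_{\max}-\sqrt{\mu_{\max}^{2}+4\sigma_{\min}^{2}}\bigr)$, i.e.\ $\lambda\in I^{-}$.

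The step I expect to be the real obstacle is the right endpoint of $I^{-}$, the one involving $\sigma_{\min}$. A Rayleigh-quotient argument applied directly to $\sK$ will not produce it, because for a unit vector $x$ close to $\ker B$ the quantity $\|Bx\|$ can be arbitrarily small, so $\|Bx\|^{2}$ cannot be bounded below by $\sigma_{\min}^{2}$. Passing to the Schur complement is precisely what circumvents this: there the relevant vector is $z=B^{T}y$, which always lies in $\range B^{T}=(\ker B)^{\perp}$, the subspace on which $B$ is bounded below by its smallest singular value, so $\sigma_{\min}$ enters legitimately. The only other point needing care --- and the reason the whole argument survives a singular $A$ --- is that $A-\lambda I$ is inverted only for $\lambda<0$, where positive semidefiniteness already guarantees invertibility; invertibility of $A$ is never used.
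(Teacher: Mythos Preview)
The paper does not actually prove this lemma: it quotes it from Rusten and Winther \cite[Lemma~2.1]{rw92} and merely remarks that the original argument goes through verbatim when $A$ is only semidefinite. Your proposal supplies a complete and correct proof, and it is essentially the Rusten--Winther argument. In particular, your observation that the upper endpoint of $I^{-}$ cannot be reached by a direct Rayleigh-quotient bound on $\|Bx\|$ and instead requires passing to the Schur complement $B(A-\lambda I)^{-1}B^{T}$ (well defined precisely because $\lambda<0$ and $A$ is semidefinite) is exactly the mechanism in the original proof, and is also the step that justifies the paper's claim that positive definiteness of $A$ is never used.
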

	When $A$ is singular, the upper bounds on both positive and negative values of $\sK$ are unchanged, and the lower negative bound reduces to $-\sigma_{\max}$. The main difficulty is that the lower bound on positive eigenvalues reduces to zero, which is not a useful bound, especially in situations where $\sK$ is known to be nonsingular (which is our assumption throughout this paper). When the null spaces of $A$ and $B$ are well separated, the matrix $\sK$ may in fact be well-conditioned and its minimal positive eigenvalue bounded away from zero.

 As a motivating example that illustrates the range of possibilities, consider the coefficient matrix
	\begin{equation}
		\label{eq:example}
		\sK = \begin{bmatrix}
			1 & 0 & b_1 \\
			0 & 0 & b_2 \\
			b_1 & b_2 & 0
		\end{bmatrix}
		\textrm{ where }
		A = \begin{bmatrix}
			1 & 0 \\
			0 & 0
		\end{bmatrix} \textrm{ and } B = \begin{bmatrix} b_1 & b_2 \end{bmatrix},
	\end{equation}
	with $b_1^2 + b_2^2 = 1$ and $b_1, b_2 > 0$. The eigenvalues of $A$ and singular value of $B$ are the same for all such $b_1$, $b_2$, but the lowest positive eigenvalue of $\sK$ varies depending on $b_1$ and $b_2$. The eigenvalues $\lambda$ of $\sK$ are the roots of the cubic polynomial $p(\lambda) = \lambda^3 - \lambda^2 - \lambda + b_2^2$. This polynomial has two positive roots and and one negative root \cite[Corollary 2.2]{bg21}; the smaller positive root approaches zero as $b_2$ goes to zero (i.e., when $A$ and $B$ have overlapping null spaces), but as $b_2$ goes to 1 (i.e., when $A$ and $B$ have orthogonal null spaces) the smaller positive root approaches 1.

		We now present  a general approach for deriving nonzero bounds for the lower positive eigenvalues of $\sK$ when $A$ is singular. We recall the following result \cite{f75,gg03}:
	\begin{lemma}
		\label{lem:kw}
		Let
		\begin{equation}
			\label{eq:def_kw}
			\sK(W) = \begin{bmatrix}
				A + B^T WB & B^T \\
				B & 0
			\end{bmatrix},
		\end{equation}
		where $W \in \mathbb{R}^{m \times m}$. If $\sK$ and $\sK(W)$ are both nonsingular, then
		\begin{equation}
			\label{eq:inv_diff}
			\sK^{-1} = (\sK(W))^{-1} + \begin{bmatrix}
				0 & 0 \\
				0 & W
			\end{bmatrix}.
		\end{equation}
	\end{lemma}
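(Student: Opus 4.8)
The plan is to verify the identity \eqref{eq:inv_diff} directly, by exhibiting the right-hand side as a right inverse of $\sK$; since $\sK$ is assumed nonsingular, any right inverse of it is automatically the inverse, so this suffices. The computation is a short block manipulation that hinges on the fact that $\sK$ and $\sK(W)$ share the same bottom block row.

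First I would set $N = \begin{bmatrix} B & 0 \end{bmatrix} \in \mathbb{R}^{m\times(n+m)}$ and record two observations. One: $\sK(W) = \sK + N^T W N$, because $N^T W N = \begin{bmatrix} B^T W B & 0 \\ 0 & 0 \end{bmatrix}$. Two: $N$ consists precisely of the last $m$ rows of $\sK(W)$, i.e. $N = \begin{bmatrix} 0 & I \end{bmatrix}\sK(W)$, so right-multiplying by $(\sK(W))^{-1}$ gives the key identity $N(\sK(W))^{-1} = \begin{bmatrix} 0 & I \end{bmatrix}$.

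Then I would multiply $\sK$ by the candidate inverse $(\sK(W))^{-1} + \begin{bmatrix} 0 & 0 \\ 0 & W \end{bmatrix}$. Using $\sK = \sK(W) - N^T W N$ together with the key identity, the first term contributes $\sK(\sK(W))^{-1} = I - N^T W N (\sK(W))^{-1} = I - N^T W \begin{bmatrix} 0 & I \end{bmatrix} = I - \begin{bmatrix} 0 & B^T W \\ 0 & 0 \end{bmatrix}$, while the block-diagonal correction contributes $\sK \begin{bmatrix} 0 & 0 \\ 0 & W \end{bmatrix} = \begin{bmatrix} A & B^T \\ B & 0 \end{bmatrix}\begin{bmatrix} 0 & 0 \\ 0 & W \end{bmatrix} = \begin{bmatrix} 0 & B^T W \\ 0 & 0 \end{bmatrix}$. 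The two $\begin{bmatrix} 0 & B^T W \\ 0 & 0 \end{bmatrix}$ terms cancel and we are left with $I$, which establishes the identity \eqref{eq:inv_diff}.

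I do not anticipate a genuine obstacle here: once the right-hand side of \eqref{eq:inv_diff} is taken as an ansatz, the verification is essentially mechanical, and the only point requiring a moment's thought is the identity $N(\sK(W))^{-1} = \begin{bmatrix} 0 & I \end{bmatrix}$, which is what makes the bookkeeping collapse. An alternative would be to view $\sK(W) = \sK + N^T W N$ as a structured symmetric low-rank update and invoke the Sherman--Morrison--Woodbury formula, but the block form is simpler and uses only the two invertibility hypotheses already stated (invertibility of $\sK(W)$ to form $(\sK(W))^{-1}$, and invertibility of $\sK$ to pass from a right inverse to the inverse).
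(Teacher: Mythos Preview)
Your verification is correct: the key observation that $N(\sK(W))^{-1} = \begin{bmatrix}0 & I\end{bmatrix}$, where $N = \begin{bmatrix}B & 0\end{bmatrix}$ is the shared bottom block row, makes the cancellation immediate, and the invertibility hypotheses are used exactly as you say. There is nothing to compare against, however, because the paper does not prove Lemma~\ref{lem:kw} at all; it merely recalls it as a known result from the literature (citing \cite{f75,gg03}). Your argument is a clean self-contained proof of that cited fact.
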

	We will assume that $W$ is positive semidefinite and the leading block $A_W := A+B^TWB$ of $\sK(W)$ is positive definite. We can use this along with \eqref{eq:inv_diff} to derive a nonzero bound on the lower positive eignvalues of $\sK$, using a free matrix parameter $W$.
	
	\begin{theorem}
		\label{thm:Wbound}
		Let $W \in \mathbb{R}^{m \times m}$ be a symmetric positive semidefinite matrix and let $A_W = A + B^T W B$. Then the positive eigenvalues of $\sK$ are greater than or equal to 
		$$\min\left\{ \mu_{\min}(A_W), \frac{1}{\mu_{\max}(W)} \right\}.$$
	\end{theorem}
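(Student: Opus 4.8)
The plan is to work directly with an eigenvector of $\sK$; this is morally the same as using the inverse identity \eqref{eq:inv_diff}, but it avoids forming any inverse and does not even require $A_W$ to be positive definite. Let $\lambda > 0$ be an eigenvalue of $\sK$ with eigenvector $\begin{bmatrix} x \\ y \end{bmatrix} \neq 0$, so that $Ax + B^T y = \lambda x$ and $Bx = \lambda y$. Since $\lambda > 0$, the second equation gives $y = Bx/\lambda$, so $x = 0$ would force $y = 0$; hence $x \neq 0$, and $\|x\|^2 > 0$ while $\|y\|^2 \geq 0$.

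The crux will be to extract two scalar identities and combine them. Taking the inner product of the first block equation with $x$ and using $x^T B^T y = (Bx)^T y = \lambda \|y\|^2$ gives $x^T A x + \lambda \|y\|^2 = \lambda \|x\|^2$. Separately, substituting $Bx = \lambda y$ into the Rayleigh quotient of $A_W$ gives $x^T A_W x = x^T A x + (Bx)^T W (Bx) = x^T A x + \lambda^2 y^T W y$. Eliminating $x^T A x$ between the two identities yields
\[
\lambda \|x\|^2 = x^T A_W x + \lambda \|y\|^2 - \lambda^2 \, y^T W y .
\]
Bounding $x^T A_W x \geq \mu_{\min}(A_W) \|x\|^2$ and $y^T W y \leq \mu_{\max}(W) \|y\|^2$ (using symmetry of $A_W$ and positive semidefiniteness of $W$) then gives
\[
\lambda \|x\|^2 \ \geq \ \mu_{\min}(A_W) \|x\|^2 + \lambda \bigl( 1 - \lambda \, \mu_{\max}(W) \bigr) \|y\|^2 .
\]

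From here the result will follow by a short case split on the size of $\lambda$. If $\lambda \geq 1/\mu_{\max}(W)$ the claimed bound holds immediately. Otherwise $1 - \lambda \, \mu_{\max}(W) > 0$, so the $\|y\|^2$ term is nonnegative and may be discarded, leaving $\lambda \|x\|^2 \geq \mu_{\min}(A_W) \|x\|^2$, i.e.\ $\lambda \geq \mu_{\min}(A_W)$; either way $\lambda \geq \min\{ \mu_{\min}(A_W),\, 1/\mu_{\max}(W) \}$. The degenerate case $W = 0$, where $\mu_{\max}(W) = 0$ and $A_W = A$, is covered by the same argument with $1/\mu_{\max}(W)$ read as $+\infty$.

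I do not anticipate a real obstacle: the argument is essentially one clean manipulation plus an elementary case distinction. The only things to watch are the bookkeeping for the degenerate vectors ($x = 0$, $y = 0$) and the trivial case $W = 0$, and the correct direction of $-\lambda^2 y^T W y \geq -\lambda^2 \mu_{\max}(W) \|y\|^2$. As an alternative that follows the augmentation viewpoint more literally, one could instead evaluate the Rayleigh quotient of $\sK^{-1} = \sK(W)^{-1} + \diag(0, W)$ at a maximal eigenvector $[u;v]$ of $\sK^{-1}$, bound the $\sK(W)^{-1}$ term by $\|u\|^2/\mu_{\min}(A_W)$ via the Schur-complement form of the saddle-point inverse (here $A_W$ positive definite is needed), bound the remaining term by $\mu_{\max}(W)\|v\|^2$, and observe that $\|u\|^2 + \|v\|^2 = 1$ makes the sum a convex combination of $1/\mu_{\min}(A_W)$ and $\mu_{\max}(W)$, hence at most their maximum; taking reciprocals yields the same bound.
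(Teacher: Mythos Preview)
Your proof is correct and takes a genuinely different route from the paper. The paper argues via $\sK^{-1}$: using the identity $\sK^{-1} = \sK(W)^{-1} + \diag(0,W)$ together with the Schur-complement formula for $\sK(W)^{-1}$, it writes
\[
\sK^{-1} \;=\; \begin{bmatrix} A_W^{-1} & 0 \\ 0 & W \end{bmatrix} \;-\; \begin{bmatrix} A_W^{-1} B^T \\ -I \end{bmatrix} S_W^{-1} \begin{bmatrix} BA_W^{-1} & -I \end{bmatrix},
\]
observes that the subtracted term is positive semidefinite, and concludes that the eigenvalues of $\sK^{-1}$ are bounded above by those of $\diag(A_W^{-1},W)$; taking reciprocals gives the stated bound. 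This route requires $A_W$ to be positive definite (an assumption the paper makes explicitly just before the theorem). Your eigenvector argument bypasses the inverse entirely: the identity $\lambda\|x\|^2 = x^T A_W x + \lambda\|y\|^2 - \lambda^2 y^T W y$ and the case split on $\lambda\,\mu_{\max}(W)$ are both correct, and the argument needs only that $A_W$ and $W$ are symmetric, not that $A_W$ is invertible. So your approach is more elementary and slightly more general, while the paper's approach makes the role of the augmentation identity and the block-diagonal upper bound on $\sK^{-1}$ more transparent. The alternative you sketch at the end (Rayleigh quotient of $\sK^{-1}$ bounded by a convex combination of $1/\mu_{\min}(A_W)$ and $\mu_{\max}(W)$) is essentially the paper's argument.
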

	
	\begin{proof}
		We derive a lower bound on the positive eigenvalues of $\sK$ by considering an upper bound on the eigenvalues of $\sK^{-1}$. By combining \cite[Equation (3.4)]{bgl05} and \eqref{eq:inv_diff}, we obtain
		\begin{equation}
			\label{eqn:sKinv}
			\sK^{-1} = \begin{bmatrix}
				A_W^{-1} - A_W^{-1}B^TS_W^{-1}BA_W^{-1} & A_W^{-1}B^T S_W^{-1} \\
				S_W^{-1}BA_W^{-1} & -S_W^{-1} + W
			\end{bmatrix},
		\end{equation}
		where $S_W = BA_W^{-1}B^T$. Notice that we can write
		$$
		\sK^{-1} = \begin{bmatrix}
			A_W^{-1} & 0 \\
			0 & W
		\end{bmatrix} - \begin{bmatrix}
			A_W^{-1} B^T \\ -I
		\end{bmatrix} S_W^{-1} \begin{bmatrix} BA_W^{-1} & -I \end{bmatrix}.
		$$
		Because the subtracted term is positive semidefinite, we conclude that the eigenvalues of $\sK^{-1}$ are less than or equal to the eigenvalues of
		$$
		\begin{bmatrix}
			A_W^{-1} & 0 \\
			0 & W
		\end{bmatrix}.
		$$
		The stated result follows.
	\end{proof}

	\section{Augmentation-based bounds when $W = \gamma I$}
	\label{sec:bnd_gamma}
	
	As in section \ref{sec:bnd_general}, we consider the augmented matrix $\sK(W)$, but in this case we restrict ourselves to the case where 
	$$W = \gamma I,$$
	as is done in \cite{eg16,gg03}. For simplicity we write $$A_{\gamma} = A+\gamma B^T B ; \qquad \sK_\gamma = \sK(\gamma I).$$ In this case, the lower bound on positive eigenvalues presented in Theorem~\ref{thm:Wbound} reduces to $\min\left\{ \mu_{\min}(A_\gamma), \frac{1}{\gamma} \right\}$.
	
	We first consider the special case where $\rank(A) = n-m$ and $\sK$ is nonsingular. We say here that $A$ is \textit{lowest-rank} because if its rank were any lower then $\sK$ would necessarily be singular. It was shown in \cite{eg15, eg16} that  $A_{\gamma} $ and $\sK_\gamma$  have unique properties, which we will use here to refine the bound on lower positive eigenvalues given in Theorem \ref{thm:Wbound}. We return in Section \ref{sec:bnd_gen} to the general case, where $A$ is assumed to be rank-deficient but not lowest rank.
	
	\subsection{Bounds when $\rank(A) = n-m$}
	\label{sec:bnd_mrd}
	
	\begin{theorem}
		\label{thm:mrd_bound}
		When $\rank(A) = n-m$, we have
		\begin{equation}
			\label{eq:min_Agamma}
			\mu_{\min}(A_{\gamma}) \ge \rho \cdot \min\left\{ \mu_{\min}^+(A), \gamma \sigma_{\min}^2(B) \right\},
		\end{equation}
		where $\rho \le 1$ is a constant that does not depend on $\gamma$.
	\end{theorem}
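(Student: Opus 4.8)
The plan is to lower-bound $\mu_{\min}(A_\gamma)$ by replacing each of the two positive semidefinite summands of $A_\gamma = A + \gamma B^T B$ with a scalar multiple of the orthogonal projector onto its range; this decouples the $\gamma$-dependent part of the bound from the $\gamma$-independent one, so that the $\min\{\cdot,\cdot\}$ in the claimed estimate falls out naturally.

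\emph{First step: reduce to projectors.} From the spectral decomposition of $A$ we have $A \succeq \mu_{\min}^+(A)\,\Pi_{\range(A)}$, where $\Pi_{\mathcal S}$ denotes the orthogonal projector onto a subspace $\mathcal S$. Since $Bx = B\,\Pi_{\range(B^T)}x$ for every $x$ and $\|By\| \ge \sigma_{\min}(B)\,\|y\|$ for every $y \in \range(B^T)$ (both because $B$ has full row rank), we similarly get $B^T B \succeq \sigma_{\min}^2(B)\,\Pi_{\range(B^T)}$. Therefore
$$
A_\gamma \ \succeq\ \mu_{\min}^+(A)\,\Pi_{\range(A)} \ +\ \gamma\,\sigma_{\min}^2(B)\,\Pi_{\range(B^T)},
$$
so it suffices to bound the smallest eigenvalue of the right-hand side.

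\emph{Second step: extract the constant.} For orthogonal projectors $P_1,P_2$ and scalars $\alpha,\beta>0$ we have $x^T(\alpha P_1 + \beta P_2)x = \alpha\|P_1x\|^2 + \beta\|P_2x\|^2 \ge \min\{\alpha,\beta\}\,x^T(P_1+P_2)x$, and $P_1+P_2$ is symmetric positive semidefinite. Combining this with the first step gives
$$
\mu_{\min}(A_\gamma)\ \ge\ \min\!\big\{\mu_{\min}^+(A),\ \gamma\,\sigma_{\min}^2(B)\big\}\cdot\mu_{\min}\!\big(\Pi_{\range(A)}+\Pi_{\range(B^T)}\big),
$$
so we set $\rho := \mu_{\min}(\Pi_{\range(A)}+\Pi_{\range(B^T)})$, which plainly does not depend on $\gamma$. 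Its null space is $\range(A)^\perp \cap \range(B^T)^\perp = \nulls(A)\cap\nulls(B)$, which is trivial because $\sK$ is nonsingular, so $\rho>0$; and since $A$ is singular we may pick a unit vector $z\in\nulls(A)=\range(A)^\perp$, whence $z^T(\Pi_{\range(A)}+\Pi_{\range(B^T)})z = \|\Pi_{\range(B^T)}z\|^2 \le 1$, so $\rho\le 1$. To connect this to the theme of the section I would finally rewrite $\Pi_{\range(A)}+\Pi_{\range(B^T)} = 2I - \Pi_{\nulls(A)} - \Pi_{\nulls(B)}$ and invoke the classical description of the spectrum of a sum of two orthogonal projectors (equivalently, the CS decomposition) to identify $\rho = 1 - \cos\theta$, where $\theta$ is the smallest principal angle between $\nulls(A)$ and $\nulls(B)$.

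\emph{Main obstacle.} The tempting alternative --- split $x$ along $\range(A)\oplus\nulls(A)$ and complete the square directly in $x^T A x + \gamma\|Bx\|^2$ --- does not work: the cross term produces a coefficient of the shape $\mu_{\min}^+(A) - c\,\gamma\,\sigma_{\max}^2(B)$ (or its reciprocal analogue), which becomes negative at extreme values of $\gamma$, so no fixed splitting parameter is uniform in $\gamma$. The decisive move is thus the coarsening in the first step: once $A$ and $B^T B$ are rounded down to scaled range-projectors, the $\min\{\cdot,\cdot\}$ separates off and the remaining constant is purely geometric. The only external ingredient I would import is the principal-angle characterization of the eigenvalues of a sum of two orthogonal projectors; note that $\rank(A)=n-m$ is not actually required for the inequality itself --- it only guarantees that $\range(A)$ and $\range(B^T)$ are complementary subspaces, which is what makes the identification of $\rho$ cleanest.
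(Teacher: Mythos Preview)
Your argument is correct and takes a genuinely different route from the paper's. The paper writes the reduced decompositions $A=U\Lambda U^T$, $B=QSV^T$, assembles $P=[\,U\ V\,]$ and $\Sigma=\diag(\Lambda,\gamma S^2)$ so that $A_\gamma=P\Sigma P^T$, and then bounds $\|A_\gamma^{-1}\|\le\|\Sigma^{-1}\|\,\|P^{-1}\|^2$, setting $\rho=\|P^{-1}\|^{-2}$; the identification $\rho=1-\cos\theta_{\min}$ is obtained in a separate lemma by computing the spectrum of $P^TP$. You instead coarsen each summand to a scaled range-projector via the Loewner order and pull out $\min\{\mu_{\min}^+,\gamma\sigma_{\min}^2\}$, arriving at $\rho=\mu_{\min}\!\big(\Pi_{\range(A)}+\Pi_{\range(B^T)}\big)$. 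The two constants coincide: with $P=[\,U\ V\,]$ one has $PP^T=UU^T+VV^T=\Pi_{\range(A)}+\Pi_{\range(B^T)}$, so $\mu_{\min}(PP^T)=\mu_{\min}(P^TP)=\|P^{-1}\|^{-2}$. Your approach is a bit more elementary (no inverse of $A_\gamma$ is formed) and, as you note, the inequality itself does not need $\rank(A)=n-m$; the paper's factorization does need $P$ square and invertible, but in return it hands you the concrete matrix $P^TP$ whose block structure makes the principal-angle computation in the follow-up lemma immediate.
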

	
	\begin{proof}
		We begin by writing a decomposition of $A_{\gamma}$ as was done in \cite{eg16}. Let
		$$
		A = U \Lambda U^T, \ \ B = Q S V^T
		$$
		be the reduced (economy-size) singular value decompositions of $A$ and $B$. 
		
		The matrices $\Lambda \in \mathbb{R}^{(n-m) \times (n-m)}$ and $U \in \mathbb{R}^{n \times (n-m)}$ comprise the eigenpairs of $A$ that correspond to its nonzero eigenvalues, and the columns of $V \in \mathbb{R}^{n \times m}$ are the set of  eigenvectors of $B^T B$ that correspond to its nonzero eigenvalues. We can then write
		\begin{equation}
			\label{eq:agamma_decomp}
			A_{\gamma} = P \Sigma P^T,
		\end{equation}
		where
		$$
		P = \begin{bmatrix}
			U & V
		\end{bmatrix}, \ \ \Sigma = \begin{bmatrix}
			\Lambda & 0 \\
			0 & \gamma S^2
		\end{bmatrix}.
		$$
		The decomposition in \eqref{eq:agamma_decomp} resembles an eigenvalue decomposition, but is not an eigenvalue decomposition in general because the columns of $V$ will not be orthogonal to those of $U$.
		
		We then derive a lower bound on the eigenvalues of $A_{\gamma}$ by obtaining an upper bound on the eigenvalues of $A_{\gamma}^{-1}$. We can write
		\begin{equation}
		\mu_{\max}(A_{\gamma}^{-1}) = || A_{\gamma}^{-1} || = || P^{-T} \Sigma^{-1} P^{-1} || \le ||\Sigma^{-1}|| \cdot ||P^{-1}||^2.
		\label{eq:pinvnorm}
		\end{equation}
		The largest eigenvalue of $\Sigma^{-1}$ is equal to $\max\left\{ \frac{1}{\mu_{\min}^+}, \frac{1}{\gamma \sigma_{\min}^2} \right\}$. The stated result follows by setting $\rho = ||P^{-1}||^{-2}$ in~\eqref{eq:pinvnorm}. 
		We claim that $\rho \le 1$, with equality when $U$ and $V$ are mutually orthogonal (that is, when the range of $A$ is orthogonal to the range of $B^T$). To show this is the case, consider $x \in \ker(A)$. We then have
		$$
		P^T x = \begin{bmatrix}
			U^T x \\
			V^T x
		\end{bmatrix} = \begin{bmatrix}
			0 \\
			V^T x
		\end{bmatrix}.
		$$
		Defining $q = P^T x$, since $V$ is orthogonal we have
		$$
		\| q \| \leq \|P^{-T} q\| \le \|P^{-T}\| \| q\|,
		$$
		meaning that $||P^{-T}||$ (and therefore $||P^{-1}||$) is greater than or equal to 1. Thus, $\rho \le 1$.
	\end{proof}

	We now provide a value for $\rho = ||P^{-1}||^{-2}$ in terms of the principal angles between $\range(A)$ and $\range(B^T)$. Let $$
	\theta_i, i = 1, \ldots, m$$
	denote these angles. The cosines $\cos(\theta_i)$ of these angles are given by the singular values of $U^T V$ (or $V^T U$).
	
	\begin{lemma}
		\label{lem:rho_val}
		Let $\theta_{\min}$ denote the minimum principal angle between $\range(A)$ and $\range(B^T)$. Then
		$$
		||P^{-1}|| = \frac{1}{\sqrt{1-\cos{(\theta_{\min})}}},
		$$
		which implies that $\rho$ defined in \eqref{eq:min_Agamma} is given by
		$$
		\rho = 1-\cos{(\theta_{\min})}.
		$$
	\end{lemma}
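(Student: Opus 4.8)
The plan is to read $\|P^{-1}\|$ off the Gram matrix $P^{T}P$. Since $P$ is square, $\|P^{-1}\| = 1/\sigma_{\min}(P)$ and $\sigma_{\min}(P)^{2} = \mu_{\min}(P^{T}P)$, so it suffices to find the smallest eigenvalue of $P^{T}P$. Because $U$ and $V$ have orthonormal columns (they are the factors of reduced SVDs),
$$
P^{T}P = \begin{bmatrix} U^{T}U & U^{T}V \\ V^{T}U & V^{T}V \end{bmatrix} = \begin{bmatrix} I_{n-m} & C \\ C^{T} & I_{m} \end{bmatrix}, \qquad C := U^{T}V .
$$

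Next I would determine the spectrum of this block matrix in terms of the singular values of $C$. Taking an SVD of $C$ and pairing a unit left singular vector $u_{i}$ with the corresponding unit right singular vector $v_{i}$ and singular value $\sigma_{i}(C)$, a direct check shows that $\bigl[\,u_{i};\ \pm v_{i}\,\bigr]$ is an eigenvector of $P^{T}P$ with eigenvalue $1 \pm \sigma_{i}(C)$, while any vector supported on $\ker C$ (in the first block) or on $\ker C^{T}$ (in the second block) is an eigenvector with eigenvalue $1$; a dimension count confirms these exhaust $\mathbb{R}^{n}$. Hence every eigenvalue of $P^{T}P$ is $1$ or of the form $1 \pm \sigma_{i}(C)$, and the smallest one is $1 - \sigma_{\max}(C)$, since this is no larger than $1$ and no larger than any $1 - \sigma_{i}(C)$. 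Recalling (as noted just before the lemma) that the singular values of $C = U^{T}V$ are exactly the cosines $\cos\theta_{i}$ of the principal angles between $\range(A)$ and $\range(B^{T})$, and that $\theta_{\min}$ is the smallest such angle, we get $\sigma_{\max}(C) = \cos\theta_{\min}$ and therefore $\mu_{\min}(P^{T}P) = 1 - \cos\theta_{\min}$.

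Finally I would observe that $P$ is invertible: $\range(U) + \range(V) = \mathbb{R}^{n}$ because $(\range(U)+\range(V))^{\perp} = \ker(A) \cap \ker(B) = \{0\}$ by the nonsingularity of $\sK$, while $\dim\range(U) + \dim\range(V) = (n-m)+m = n$. Consequently $\mu_{\min}(P^{T}P) = 1 - \cos\theta_{\min} > 0$, so $\sigma_{\min}(P) = \sqrt{1-\cos\theta_{\min}}$, giving $\|P^{-1}\| = 1/\sqrt{1-\cos\theta_{\min}}$; combining with $\rho = \|P^{-1}\|^{-2}$ from the proof of Theorem~\ref{thm:mrd_bound} yields $\rho = 1-\cos\theta_{\min}$.

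The main obstacle is the bookkeeping in the eigenvalue/singular-value correspondence for the block matrix $\left[\begin{smallmatrix} I & C \\ C^{T} & I \end{smallmatrix}\right]$ when $C$ is genuinely rectangular: one must be careful that there are only $\min\{m,\,n-m\}$ nonzero singular values (the remaining principal angles being right angles) and that the extra eigenvalues equal to $1$ never fall below $1-\cos\theta_{\min}$ (which they cannot, as $1 \ge 1 - \cos\theta_{\min}$). The other point requiring care is justifying $\cos\theta_{\min} < 1$, i.e.\ that $\range(A) \cap \range(B^{T}) = \{0\}$; this is exactly where the standing assumption that $\sK$ is nonsingular, together with $\rank(A) = n-m$, enters.
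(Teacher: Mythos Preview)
Your proof is correct and follows essentially the same route as the paper: both reduce to computing the spectrum of the Gram matrix $P^{T}P = \left[\begin{smallmatrix} I & U^{T}V \\ V^{T}U & I \end{smallmatrix}\right]$ and identify its eigenvalues as $1\pm\cos\theta_i$ (together with a block of eigenvalues equal to $1$), so that $\mu_{\min}(P^{T}P)=1-\cos\theta_{\min}$. The only difference is tactical---you build the eigenvectors directly from the SVD of $C=U^{T}V$, while the paper writes out the block eigenvalue equations and eliminates by substitution; note one small slip in your write-up: the eigenvectors with eigenvalue $1$ live in $\ker C^{T}$ in the \emph{first} block and $\ker C$ in the \emph{second}, not the other way around.
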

	
	\begin{proof}
		We proceed by analyzing the eigenvalues of $P^TP$, using the fact that
		$$
		||P^{-1}|| = \frac{1}{\sqrt{\mu_{\min}({P^T P})}}.
		$$
		We write $P^T P$ in block form:
		$$
		P^T P = \begin{bmatrix}
			U^T \\
			V^T
		\end{bmatrix}
		\begin{bmatrix}
			U & V
		\end{bmatrix} = \begin{bmatrix}
			I & U^T V \\
			V^T U & I
		\end{bmatrix}.
		$$
		The (1,1)-block of $P^TP$ is size $(n-m) \times (n-m)$ and the (2,2)-block is size $m \times m$. We now assume without loss of generality that $n-m \ge m$. (If $n-m < m$, we can reorder the blocks of $P^T P$ such that the (1,1)-block is larger, and use the same analysis as below.)  
		
		Letting $v = \begin{bmatrix} x^T & y^T \end{bmatrix}^T$ be an appropriately partitioned eigenvector, we write the eigenvalue equations for $P^T P$:
		\begin{subequations}
			\begin{align}
				x + U^TV y &= \lambda x; \label{eq:ptp_eig_1} \\
				V^TU x + y &= \lambda y.\label{eq:ptp_eig_2}
			\end{align}
		\end{subequations}
		There is an eigenvalue $\lambda = 1$ with multiplicity $n-2m$, which we observe by choosing $x \in \ker(V^T U)$ and $y=0$. For the remaining $2m$ eigenvalues, we assume $\lambda \ne 1$. From \eqref{eq:ptp_eig_1} we have $x = \frac{1}{\lambda-1}U^T V y$, which we substitute into \eqref{eq:ptp_eig_2} to obtain
		\begin{equation}
			\label{eqn:y_vuuv}
			y = \frac{1}{(\lambda-1)^2}V^T U U^T V y.
		\end{equation}
		The eigenvalues of $V^T U U^T V$ are given by $\cos^2(\theta_i)$, where $\theta_i$ are the principal angles between $\range(A)$ and $\range(B^T)$. Thus, for each $\theta_i$ we can write \eqref{eqn:y_vuuv} as
		$$
		y = \frac{\cos^2(\theta_i)}{(\lambda_i-1)^2} y,
		$$
		implying that
		$$
		\lambda_i = 1 \pm \cos(\theta_i).
		$$
		Thus each $\theta_i$ yields two distinct eigenvalues. Together with the $n-2m$ eigenvalues with $\lambda = 1$, this accounts for all $n$ eigenvalues of $P^TP$. Therefore, the smallest eigenvalue of $P^TP$ is given by $1- \cos(\theta_{\min})$; the stated result follows.
	\end{proof}
	
	We can use the results we have established for matrices with lowest-rank $A$ to derive a lower bound on the positive eigenvalues of $\sK$ that does not require us to know the eigenvalues of $A_{\gamma}$. We saw in Theorem \ref{thm:Wbound} that for $W = \gamma I$, the bound is given by $\min\left\{ \mu_{\min}(A_\gamma), \frac{1}{\gamma} \right\}$. As $\gamma$ decreases, the value of $\mu_{\min}(A_\gamma)$ approaches zero (because $A_\gamma$ approaches $A$); thus, we achieve the best possible lower bound when
	$$
	\frac{1}{\gamma}= \mu_{\min}(A_\gamma).
	$$
	Since we do not generally know the value of $\mu_{\min}(A_\gamma)$, we can instead select $\frac{1}{\gamma}$ to be equal to the reciprocal of the lower bound on $\mu_{\min}(A_\gamma)$ given by Theorem \ref{thm:mrd_bound} and Lemma \ref{lem:rho_val}. That is, we find a $\gamma$ that satisfies
	$$
	\frac{1}{\gamma} = \left(1 - \cos(\theta_{\min}) \right) \min\left\{ \mu_{\min}^+, \gamma \sigma_{\min}^2\right\}.
	$$
	Depending on which of the arguments to the $\min$ function is smaller, we either have
	$$\frac{1}{\gamma} = \mu_{\min}^+ (1- \cos(\theta_{\min}))$$
	or we have $\frac{1}{\gamma} = (1- \cos(\theta_{\min})) \cdot \gamma \sigma_{\min}^2$, which implies that
	$$
	\frac{1}{\gamma} = \sigma_{\min}\sqrt{1-\cos(\theta_{\min})}.
	$$
	Therefore, if we select
	$$
	\frac{1}{\gamma} = \min\left\{ \mu_{min}^+ (1- \cos(\theta_{\min})), \sigma_{\min}\sqrt{1-\cos(\theta_{\min})} \right\},
	$$
	we know that $\mu_{\min}(A_{\gamma})$ will be greater than or equal to this value of $\frac{1}{\gamma}$. This gives the following result:
	\begin{theorem}
		\label{thm:mrd_K_bnd}
		When $\rank(A) = n-m$, the positive eigenvalues of $\sK$ are greater than or equal to
		$$
		\min\left\{ \mu_{\min}^+ (1- \cos(\theta_{\min})), \sigma_{\min}\sqrt{1-\cos(\theta_{\min})} \right\}.
		$$
	\end{theorem}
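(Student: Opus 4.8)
The plan is to combine the three results already established for the lowest-rank setting and then optimize over the free scalar parameter $\gamma$.

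First I would specialize Theorem~\ref{thm:Wbound} to $W = \gamma I$ with $\gamma > 0$: this shows that every positive eigenvalue of $\sK$ is at least $\min\{\mu_{\min}(A_\gamma),\ 1/\gamma\}$, and since this holds for every such $\gamma$, we are free to choose $\gamma$ to make the bound as large as possible. Next I would invoke Theorem~\ref{thm:mrd_bound} together with Lemma~\ref{lem:rho_val} to replace the (unknown) quantity $\mu_{\min}(A_\gamma)$ by the explicit lower bound $(1-\cos(\theta_{\min}))\min\{\mu_{\min}^+,\ \gamma\sigma_{\min}^2\}$. This produces, for every $\gamma > 0$,
$$
\min\left\{(1-\cos(\theta_{\min}))\min\{\mu_{\min}^+,\ \gamma\sigma_{\min}^2\},\ \frac{1}{\gamma}\right\}
$$
as a valid lower bound on the positive eigenvalues of $\sK$.

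The heart of the argument is choosing $\gamma$ to balance the two terms of the outer minimum. Writing $c = 1-\cos(\theta_{\min})$, I would set $1/\gamma$ equal to the explicit lower bound on $\mu_{\min}(A_\gamma)$ and solve: in the regime where the first argument of the inner minimum is active this gives $1/\gamma = c\,\mu_{\min}^+$, and in the other regime it gives $1/\gamma = c\gamma\sigma_{\min}^2$, i.e. $1/\gamma = \sigma_{\min}\sqrt{c}$. Taking
$$
\frac{1}{\gamma} = \min\left\{c\,\mu_{\min}^+,\ \sigma_{\min}\sqrt{c}\right\}
$$
then works in both cases: substituting this $\gamma$, one checks that the explicit lower bound on $\mu_{\min}(A_\gamma)$ is itself at least $1/\gamma$, so the outer minimum is attained at $1/\gamma$, which is precisely the quantity in the statement.

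The main obstacle — indeed the only step that is not immediate — is the consistency check in that last paragraph: in the case $c(\mu_{\min}^+)^2 \le \sigma_{\min}^2$ one must verify $\gamma\sigma_{\min}^2 \ge \mu_{\min}^+$ (so the inner minimum selects $\mu_{\min}^+$ and the bound on $\mu_{\min}(A_\gamma)$ equals $c\,\mu_{\min}^+ = 1/\gamma$), and in the case $c(\mu_{\min}^+)^2 > \sigma_{\min}^2$ one must verify $\gamma\sigma_{\min}^2 < \mu_{\min}^+$ (so the bound equals $c\gamma\sigma_{\min}^2 = \sigma_{\min}\sqrt{c} = 1/\gamma$). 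Both follow by plugging in the chosen $\gamma$ and using the case hypothesis; it is pure bookkeeping, and no facts about principal angles beyond Lemma~\ref{lem:rho_val} are needed.
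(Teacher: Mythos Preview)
Your proposal is correct and follows essentially the same route as the paper: specialize Theorem~\ref{thm:Wbound} to $W=\gamma I$, replace $\mu_{\min}(A_\gamma)$ by the explicit lower bound from Theorem~\ref{thm:mrd_bound} and Lemma~\ref{lem:rho_val}, and then choose $1/\gamma$ to equal that lower bound. The paper simply asserts the final consistency check (``we know that $\mu_{\min}(A_\gamma)$ will be greater than or equal to this value of $1/\gamma$''), whereas you spell out the two-case verification explicitly; otherwise the arguments are identical.
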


	In some cases, more may be known about the null spaces of $A$ and $B$ than the ranges of $A$ and $B^T$. For these settings, it is convenient to re-frame the result of Theorem \ref{thm:mrd_K_bnd} to rely on the angle between kernels rather than the angle between ranges. Because $\ker(A)$ and $\ker(B)$ are respectively orthogonal to $\range(A)$ and $\range(B^T)$, the principal angles are the same between both pairs of subspaces. The following result then holds.
	
	\begin{corollary}
		\label{cor:mrd_kernels_bnd}
		Let $\rank(A) = n-m$ and let $\psi_{\min}$ denote the minimum principal angle between $\ker(A)$ and $\ker(B)$. The positive eigenvalues of $\sK$ are greater than or equal to
		$$
		\min\left\{ \mu_{\min}^+ (1- \cos(\psi_{\min})), \sigma_{\min}\sqrt{1-\cos(\psi_{\min})} \right\}.
		$$
	\end{corollary}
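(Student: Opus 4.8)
The statement is just the translation of Theorem~\ref{thm:mrd_K_bnd} from the ranges of $A$ and $B^T$ to their kernels, so the only thing to verify is that $\cos(\psi_{\min}) = \cos(\theta_{\min})$, where $\theta_{\min}$ denotes the minimum principal angle between $\range(A)$ and $\range(B^T)$. The plan is to exploit the orthogonality relations $\ker(A) = \range(A)^\perp$ (which holds because $A$ is symmetric) and $\ker(B) = \range(B^T)^\perp$, together with the standard fact that a pair of subspaces and the pair of their orthogonal complements share the same principal angles. Once $\theta_{\min} = \psi_{\min}$ is established, the result follows by substituting $\psi_{\min}$ for $\theta_{\min}$ in Theorem~\ref{thm:mrd_K_bnd}.

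To make the principal-angle identity precise I would reuse the bases from the proof of Theorem~\ref{thm:mrd_bound}. Let $U$ and $V$ have orthonormal columns spanning $\range(A)$ and $\range(B^T)$, and complete them to orthogonal matrices $[\,U\ U_\perp\,]$ and $[\,V\ V_\perp\,]$, so that the columns of $U_\perp$ span $\ker(A)$ and those of $V_\perp$ span $\ker(B)$. The cosines $\cos(\theta_i)$ are the singular values of $U^T V$ and the cosines $\cos(\psi_i)$ are the singular values of $U_\perp^T V_\perp$, and both matrices are diagonal blocks of the orthogonal matrix $[\,U\ U_\perp\,]^T[\,V\ V_\perp\,]$. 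Because $\rank(A) = n-m$ we have $\dim\range(A) + \dim\range(B^T) = n$, and because $\sK$ is nonsingular we have $\ker(A)\cap\ker(B) = \{0\}$; by the dimension count this in turn forces $\range(A)\cap\range(B^T) = \{0\}$ and $\range(A)+\range(B^T) = \mathbb{R}^n$. With no common directions in either pair, the CS decomposition of $[\,U\ U_\perp\,]^T[\,V\ V_\perp\,]$ shows that $U^TV$ and $U_\perp^T V_\perp$ have exactly the same nonzero singular values (differing only by padding zeros), and in particular $\sigma_{\max}(U^TV) = \sigma_{\max}(U_\perp^T V_\perp)$, i.e.\ $\cos(\theta_{\min}) = \cos(\psi_{\min})$.

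The computational content here is essentially nil; the only step that needs care is the bookkeeping above, since in general the principal angles of a pair of subspaces and those of their orthogonal complements can disagree through spurious angles equal to $0$ or $\pi/2$ and through differing counts, and it is precisely the hypotheses $\rank(A)=n-m$ and $\sK$ nonsingular that exclude this. (Alternatively, one may simply cite the classical characterization of principal angles via orthogonal complements.) Substituting $\cos(\psi_{\min})$ for $\cos(\theta_{\min})$ in Theorem~\ref{thm:mrd_K_bnd} then gives the bound $\min\{\mu_{\min}^+(1-\cos(\psi_{\min})),\ \sigma_{\min}\sqrt{1-\cos(\psi_{\min})}\}$, as claimed.
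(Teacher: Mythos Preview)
Your proposal is correct and follows the same idea as the paper, which simply asserts in the sentence preceding the corollary that since $\ker(A)$ and $\ker(B)$ are respectively orthogonal to $\range(A)$ and $\range(B^T)$, the principal angles between the two pairs of subspaces coincide. Your CS-decomposition argument and the dimension bookkeeping using $\rank(A)=n-m$ and nonsingularity of $\sK$ supply the careful justification that the paper omits.
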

	
	\subsection{Bounds when $\rank(A) \ge n-m$}
	\label{sec:bnd_gen}
	We now return to the case in which $A$ is rank-deficient but not lowest rank, and discuss how the results of the previous section can be extended to this case. Let us denote the eigenvalue decomposition of $A$ by:
	$$
	A = U \Lambda U^T.
	$$
	Let $\Lambda^{\max}_{n-m}$ be a diagonal matrix of the $n-m$ largest eigenvalues of $\Lambda$ and $\Lambda^{\min}_m$ be a diagonal matrix of the $m$ smallest. Similarly, let $U^{\max}_{n-m}$ denote the eigenvectors corresponding to the $n-m$ largest eigenvalues and $U^{\min}_m$ the eigenvectors corresponding to the $m$ smallest eigenvalues. We then have
	\begin{equation}
		\label{eq:A_spec_split}
		A = \begin{bmatrix}
			U^{\max}_{n-m} & U^{\min}_m
		\end{bmatrix}
		\begin{bmatrix}
			\Lambda^{\max}_{n-m} & 0\\
			0 & \Lambda^{\min}_m
		\end{bmatrix}
		\begin{bmatrix}
			\left(U^{\max}_{n-m}\right)^T \\
			\left(U^{\min}_m\right)^T
		\end{bmatrix}.
	\end{equation}
	As before, if we consider a weight matrix $W = \gamma I$, a lower bound on the positive eigenvalues of $\sK$ is given by
	$$
	\min\left\{ \frac{1}{\gamma}, \mu_{\min}(A_{\gamma}) \right\},
	$$
	as this bound does not depend on the nullity of $A$. When $A$ is not lowest rank, the bound of Theorem \ref{thm:mrd_bound} is not immediately applicable. However, we note from \eqref{eq:A_spec_split} that
	\begin{equation}
		\label{eq:a_sum}
	A = A^{\max}_{n-m} + A^{\min}_m,
	\end{equation}
	where $A^{\max}_{n-m} = U^{\max}_{n-m}\Lambda^{\max}_{n-m}\left(U^{\max}_{n-m}\right)^T$ is semidefinite matrix with rank $n-m$ and $A^{\min}_m = U^{\min}_m \Lambda^{\min}_m \left(U^{\min}_m\right)^T$ is a semidefinite matrix with rank less than or equal to $m$. Thus, the eigenvalues of $A_{\gamma}$ are all greater than or equal to those of
	$$
	A^{\max}_{n-m} + \gamma B^T B =: A^{\max}_{\gamma}.
	$$ 
	The eigenvalue $\mu_{n-m}$ is the smallest eigenvalue in $\Lambda^{\max}_{n-m}$ and therefore the smallest positive eigenvalue of $A^{\max}_{n-m}$. Let $\tilde{\theta}_{\min}$ denote the minimum principal angle between $\range(A^{\max}_{n-m})$ and $\range({B^T})$. By Theorem \ref{thm:mrd_bound} and Lemma \ref{lem:rho_val}, we have
	$$
	\mu_{\min}(A_{\gamma}) \ge \mu_{\min}(A^{\max}_{\gamma}) \ge \left( 1-\cos(\tilde{\theta}_{\min}) \right) \cdot \min\left\{ \mu_{n-m}, \gamma \sigma_{\min}^2 \right\}.
	$$
	As we did before, we can select $\frac{1}{\gamma}$ to be equal to the smaller of these two values to obtain a lower bound on the positive eigenvalues of $\sK$ that does not require forming an augmented matrix. The proof of the following theorem is similar to that of Theorem \ref{thm:mrd_K_bnd} and is omitted.
	
	\begin{theorem}
		\label{thm:rd_K_eig}
		Let $A$ be semidefinite with $n-m \le \rank(A) \le n$. The positive eigenvalues of $\sK$ are greater than or equal to
		$$
		\min\left\{ \mu_{n-m} (1- \cos(\tilde{\theta}_{\min})), \sigma_{\min}\sqrt{1-\cos(\tilde{\theta}_{\min})} \right\},
		$$
		where $\mu_{n-m}$ denotes the $(n-m)$-th largest eigenvalue of $A$ and $\tilde{\theta}_{\min}$ the smallest principal angle between $\range(B^T)$ and the subspace spanned by the eigenvectors corresponding to the $n-m$ largest eigenvalues of $A$. (Or, equivalently, $\tilde{\theta}_{\min}$ is the smallest principal angle between $\ker(B)$ and the subspace spanned by the eigenvectors corresponding to the $m$ smallest eigenvalues of $A$ -- see Corollary \ref{cor:mrd_kernels_bnd}.)
	\end{theorem}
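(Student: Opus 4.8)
The plan is to mirror the argument preceding Theorem~\ref{thm:mrd_K_bnd}, with the rank-$(n-m)$ matrix $A^{\max}_{n-m}$ playing the role that the lowest-rank leading block played there. Fix $W=\gamma I$, so that Theorem~\ref{thm:Wbound} tells us the positive eigenvalues of $\sK$ are at least $\min\{\,1/\gamma,\ \mu_{\min}(A_\gamma)\,\}$ for every $\gamma>0$; the task is then to choose $\gamma$ so that this quantity is at least the claimed bound.

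First I would reduce to the lowest-rank setting. From the spectral splitting \eqref{eq:A_spec_split}--\eqref{eq:a_sum} and $A^{\min}_m\succeq 0$ we get
$$
A_\gamma \;=\; A^{\max}_{n-m}+A^{\min}_m+\gamma B^TB \;\succeq\; A^{\max}_{n-m}+\gamma B^TB \;=\; A^{\max}_\gamma,
$$
so eigenvalue monotonicity gives $\mu_{\min}(A_\gamma)\ge\mu_{\min}(A^{\max}_\gamma)$. Since $\rank(A)\ge n-m$, the $(n-m)$-th largest eigenvalue $\mu_{n-m}$ of $A$ is positive, hence $A^{\max}_{n-m}$ is positive semidefinite of rank exactly $n-m$ with smallest positive eigenvalue $\mu_{n-m}$. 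Applying Theorem~\ref{thm:mrd_bound} and Lemma~\ref{lem:rho_val} with $A^{\max}_{n-m}$ in place of $A$ then yields
$$
\mu_{\min}(A^{\max}_\gamma)\;\ge\;\bigl(1-\cos(\tilde{\theta}_{\min})\bigr)\,\min\bigl\{\mu_{n-m},\ \gamma\sigma_{\min}^2\bigr\},
$$
where $\tilde{\theta}_{\min}$ is the smallest principal angle between $\range(A^{\max}_{n-m})=\operatorname{span}(U^{\max}_{n-m})$ and $\range(B^T)$.

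Next I would optimize over $\gamma$ exactly as in the computation before Theorem~\ref{thm:mrd_K_bnd}: setting $1/\gamma$ equal to the lower bound just obtained for $\mu_{\min}(A_\gamma)$ and splitting on which argument of the inner $\min$ is active gives the two candidate values $1/\gamma=\mu_{n-m}\bigl(1-\cos(\tilde{\theta}_{\min})\bigr)$ and $1/\gamma=\sigma_{\min}\sqrt{1-\cos(\tilde{\theta}_{\min})}$; taking $1/\gamma$ to be the smaller of the two yields a $\gamma>0$ for which $\mu_{\min}(A_\gamma)\ge\mu_{\min}(A^{\max}_\gamma)\ge 1/\gamma$, so $\min\{1/\gamma,\mu_{\min}(A_\gamma)\}=1/\gamma$ is precisely the stated bound. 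Finally, the equivalent phrasing in terms of $\ker(B)$ and the $m$ smallest eigenvectors of $A$ follows because $\range(A^{\max}_{n-m})^\perp=\operatorname{span}(U^{\min}_m)$ and $\range(B^T)^\perp=\ker(B)$, and the nonzero principal angles of a pair of subspaces coincide with those of the pair of orthogonal complements, just as in Corollary~\ref{cor:mrd_kernels_bnd}.

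The step that needs care is the hypothesis check in the reduction: Theorem~\ref{thm:mrd_bound} and Lemma~\ref{lem:rho_val} require the matrix $P=[\,U^{\max}_{n-m}\ \ V\,]$ to be invertible, equivalently $\tilde{\theta}_{\min}>0$ (equivalently $A^{\max}_\gamma\succ 0$). If $\tilde{\theta}_{\min}=0$ the asserted bound is $0$ and, since $\sK$ is nonsingular, the statement holds trivially, so one may assume $\tilde{\theta}_{\min}>0$. The remaining pieces --- the Weyl monotonicity step, the $\gamma$-arithmetic, and the orthogonal-complement identity for principal angles --- are routine and already appear, in essentially the same form, earlier in the paper.
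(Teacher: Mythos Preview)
Your proposal is correct and follows precisely the route the paper takes: reduce to the lowest-rank case via the spectral splitting $A=A^{\max}_{n-m}+A^{\min}_m$ and Weyl monotonicity, invoke Theorem~\ref{thm:mrd_bound} and Lemma~\ref{lem:rho_val} for $A^{\max}_{n-m}$, then optimize over $\gamma$ exactly as in the derivation of Theorem~\ref{thm:mrd_K_bnd}. Your explicit handling of the degenerate case $\tilde{\theta}_{\min}=0$ (where $P$ fails to be invertible but the asserted bound is $0$) is a welcome addition that the paper leaves implicit.
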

	
	\paragraph{Remark.} Our approach in deriving the previous result was to convert a general rank-deficient $A$ into a lowest-rank $\tilde{A}$ by removing the part of the spectrum corresponding to the $m$ smallest eigenvalues. However, removing this part of the spectrum of $A$ is not always a good choice, in that it may lead to an overly pessimistic bound. For example, consider the matrix (with $n=3$ and $m=2$):
	$$
	\sK = \left[\begin{array}{c c c | c c}
		1 & 0 & 0 & 0 & 1 \\
		0 & \alpha & 0 & 0 & 0\\
		0 & 0 & 0 & 1 & 0 \\
		\hline
		0 & 0 & 1 & 0 & 0 \\
		1 & 0 & 0 & 0 & 0
	\end{array}\right] =: \begin{bmatrix}
		A & B^T \\
		B & 0
	\end{bmatrix},
	$$
	where $0 < \alpha < 1$. The positive eigenvalues of $\sK$ are $\alpha, 1$, and $\frac{1+\sqrt{5}}{2}$. The ``non-removed'' eigenvector $U^{\max}_{n-m}$, which is in this case the eigenvector corresponding to $\lambda = 1$, is:
	$$
	U^{\max}_{n-m} = \begin{bmatrix}
		1 \\
		0 \\
		0
	\end{bmatrix}.
	$$
	Because this eigenvector is in the range of $B^T$, the value $\tilde{\theta}_{\min}$ is 0, meaning that Theorem \ref{thm:rd_K_eig} gives a bound of 0. We would obtain a better bound if, instead of keeping the part of the spectrum of $A$ that corresponds to the eigenvalue $\lambda = 1$, we kept the portion of the spectrum corresponding to $\lambda = \alpha$ (this would in fact give a tight bound of $\alpha$). However, the issue of optimizing what subspace of $\range(A)$ to use in order to obtain a bound is beyond the scope of this work.
	
	\section{Numerical experiments}
	\label{sec:numex}
	
	We test our eigenvalue bounds on two problems. The first is an electromagnetics model problem described in \cite{gs07}. Consider the time-harmonic Maxwell equations in lossless media with perfectly conducting boundaries and constant coefficients. The problem is to find the vector field $u$ and multiplier $p$ such that
	\begin{align*}
		\nabla \times \nabla \times u + \nabla p &= f \textrm{    in } \Omega,\\
		\nabla \cdot u &= 0 \textrm{    in } \Omega,\\
		u \times n &= 0 \textrm{    on } \partial \Omega,\\
		p &= 0 \textrm{    on } \partial \Omega.
	\end{align*}
	Discretizing with N\'{e}d\'{e}l\'{e}c finite elements for $u$ and nodal elements for $p$ \cite{m03} yields a linear system of the form
	\begin{equation*}
		\begin{bmatrix}
			A & B^T \\
			B & 0
		\end{bmatrix}
		\begin{bmatrix}
			u \\
			p
		\end{bmatrix} =
		\begin{bmatrix}
			g \\
			0
		\end{bmatrix},
	\end{equation*}
	where $A$ is a discrete curl-curl operator, $B$ is a discrete divergence operator, and $M$ is the finite element mass matrix.	
	
In the above-described problem, $A$ has rank $n-m$, and hence it is lowest rank per the terminology we use in this paper. Figure \ref{fig:maxwell} shows the predicted bound (as a solid line), the actual smallest positive eigenvalue (dashed line) for various values of $\gamma$ for a Maxwell matrix with $n = 6,080$ and $m=1,985$.
	
	The second problem describes linear systems arising from an interior point method (IPM) solution to a quadratic program (QP); see \cite{nw06} and the references therein for a detailed description. At each iteration of the IPM, we solve a linear system with a matrix of the form
	$$
	\sK = \begin{bmatrix}
		H + X^{-1}Z & J^T \\
		J & 0
	\end{bmatrix},
	$$
	where $H$ and $J$ are respectively the Hessian and Jacobian matrices for the QP, and $X$ and $Z$ are diagonal matrices of the current primal and dual iterates, some entries of which go to 0 as the iterations progress. Thus, the leading block becomes progressively more ill-conditioned as the iterations proceed.
	
	In Figure \ref{fig:tomlab} we show the results of our bounds on the first IPM iteration on TOMLAB\footnote{Test matrices available at https://tomopt.com/tomlab/.} Problem 17 for which the saddle-point matrix $\sK$ is numerically singular. This problem has $n=293$ and $m=286$. For the particular matrix shown in the experiment below (which arises in the 12th iteration of the IPM algorithm of \cite{m92}), there are 115 ``numerically zero'' eigenvalues of the leading block (which we define as those less than machine epsilon times the largest eigenvalue of that block).

		\begin{figure}[tbh!]
			\centering
			\includegraphics[width=.8\linewidth]{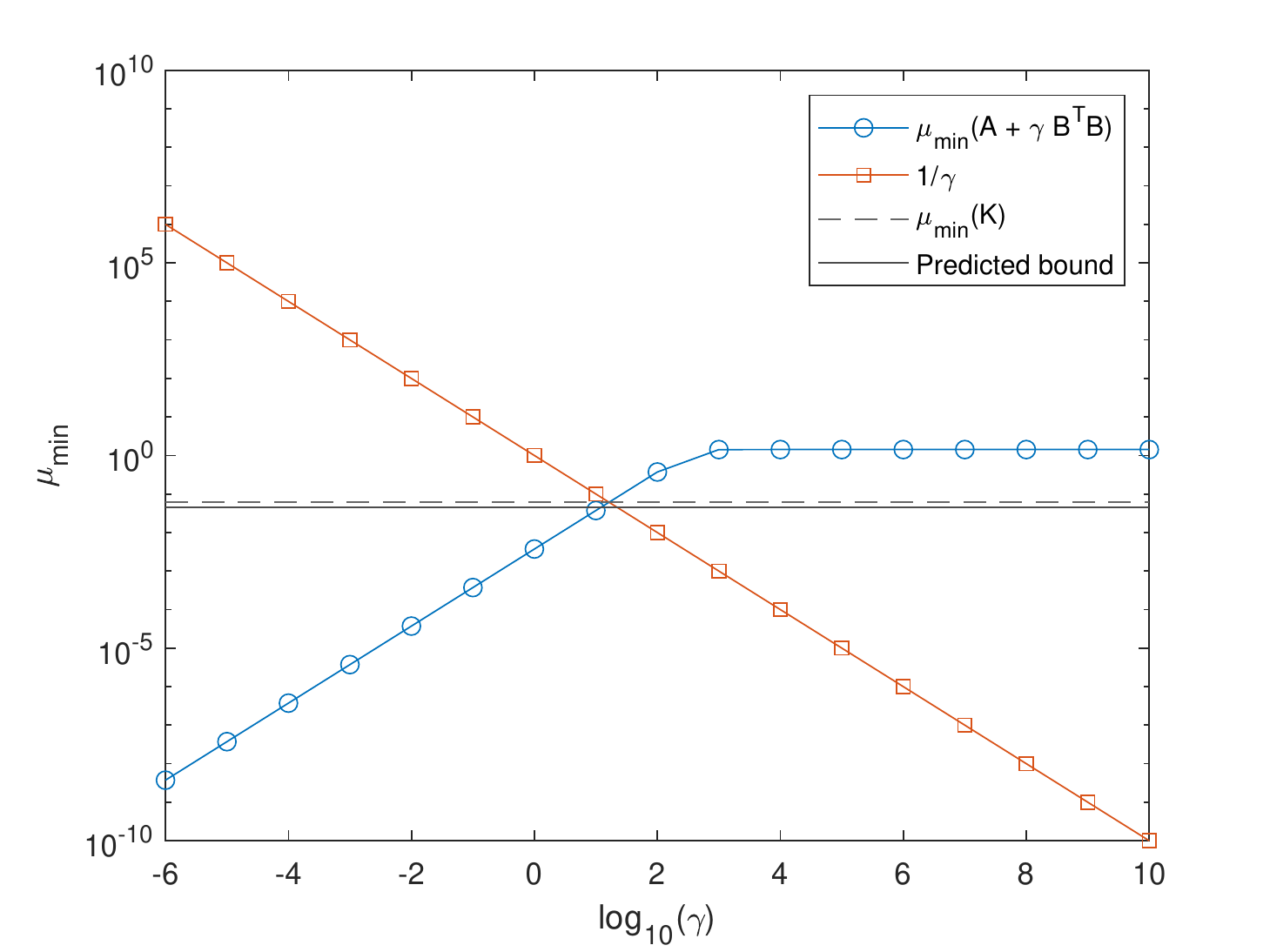}  
			\caption{Comparison of predicted and actual smallest positive eigenvalue bounds at various values of $\gamma$ for the Maxwell matrix (lowest rank)}
			\label{fig:maxwell}
		\end{figure}

		\begin{figure}[tbh!]
			\centering
			\includegraphics[width=.8\linewidth]{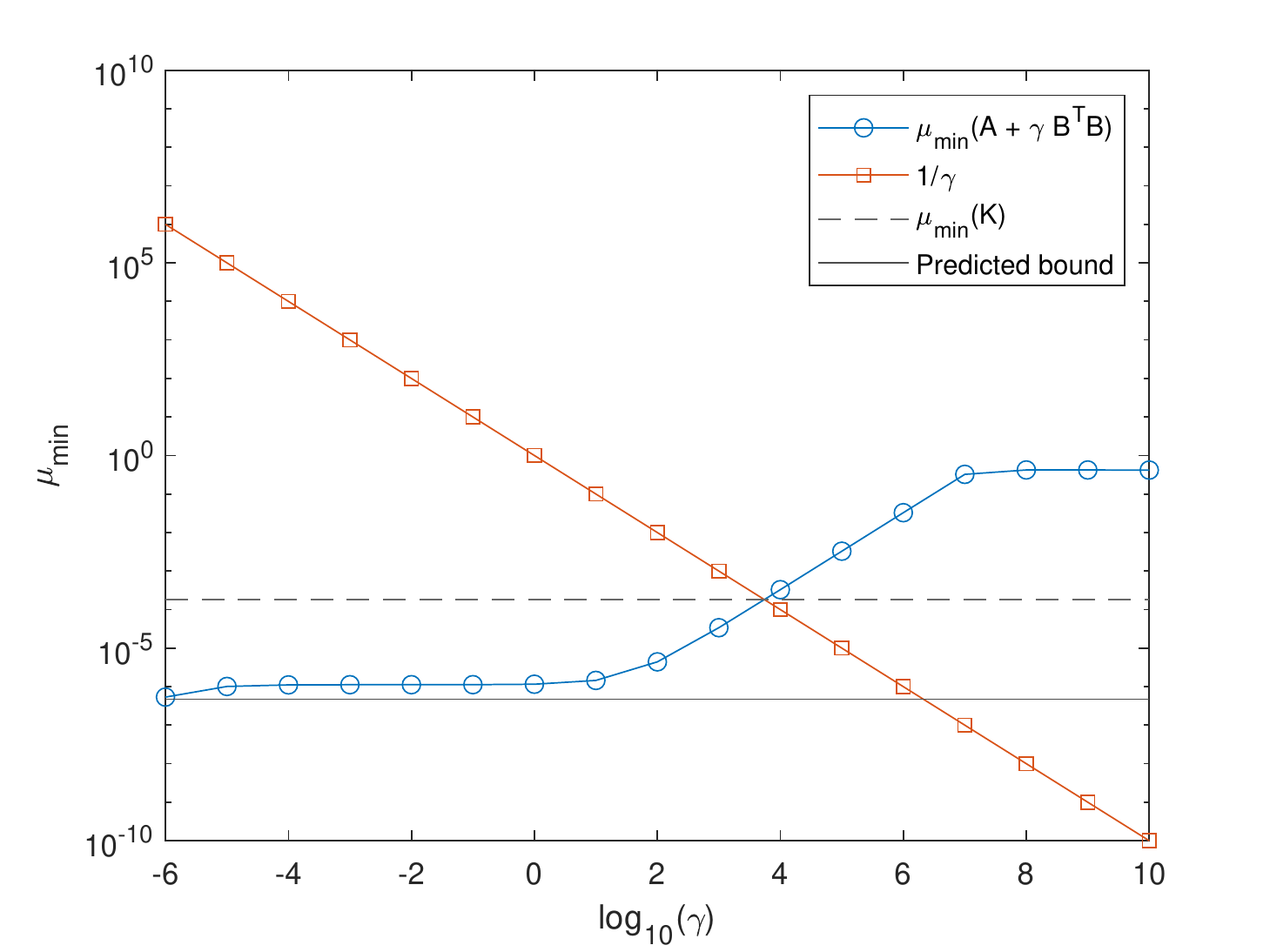}
			\caption{Comparison of predicted and actual smallest positive eigenvalue bounds at various values of $\gamma$ for the IPM matrix for TOMLAB QP 17}
			\label{fig:tomlab}
		\end{figure}

	In both cases the actual smallest positive eigenvalue $\mu_{\min}(\sK)$ occurs precisely where $\frac{1}{\gamma} = \mu_{\min}(A_{\gamma})$. The bounds for the Maxwell matrix are rather tight, in the sense that they are of the same order of magnitude as the eigenvalue (we also see this with Maxwell matrices of other sizes): the predicted eigenvalue bound is 0.0453 while the actual smallest positive eigenvalue is 0.0611. 
	
	The bound for the TOMLAB problem is looser: the predicted bound is $4.716 \times 10^{-7}$ while the actual smallest positive eigenvalue is $1.817 \times 10^{-4}$. Recall that our approach for deriving the bound for a matrix with $A$ that does not have the lowest rank consisted of two steps: (1) implicitly convert the matrix to one with a lowest-rank leading block by ``dropping''  part of the spectrum of $A$ corresponding to the smallest positive eigenvalues; and (2) estimate the lower bound for the matrix with the lowest-rank leading block using the results of Section \ref{sec:bnd_mrd}, using the fact that this will also be a lower bound for the original matrix. Because our bound in the non-lowest-rank case relies on ``dropping'' part of the spectrum of $A$, as discussed in Section \ref{sec:bnd_gen}, we might in general expect that to lead to some looseness in the bound.
	
	However, the dropping is not the cause of the looseness in this case of the TOMLAB problem, as the saddle-point matrix we obtain by simply replacing $A$ with its dropped portion $A_{n-m}^{\max}$ (defined in \eqref{eq:a_sum}) has almost the same smallest positive eigenvalue as the original matrix ($1.810 \times 10^{-4}$, compared with $1.817 \times 10^{-4}$). Thus, the looseness in this bound does not come from the dropping part of the spectrum of $A$ to create a lowest-rank matrix, but rather in the estimation of the lower positive eigenvalue bound of the modified matrix.
	
	\section{Conclusions}
	\label{sec:conclusions}
	We have described a novel framework for bounding eigenvalues of saddle-point matrices by strategically augmenting some of their blocks. We used this approach to derive (nonzero) bounds on the lower positive eigenvalues of saddle-point matrices with singular leading blocks. By making certain assumptions on the augmentation parameters, we were able to derive an eigenvalue bound that does not require the formation of an augmented matrix. 
	
	Future work may include improving the bound in the non-lowest-rank case (for instance, by judiciously selecting the portion of the spectrum of $A$ that is ``dropped'') and using this framework to analyze the convergence of preconditioned iterative solvers.
	
	\bibliographystyle{abbrv}
	\bibliography{eig22-ref}

\begin{thebibliography}{10}

\bibitem{bgl05}
M.~Benzi, G.~H. Golub, and J.~Liesen.
\newblock Numerical solution of saddle point problems.
\newblock {\em Acta Numerica}, 14:1--137, 2005.

\bibitem{bg21}
S.~Bradley and C.~Greif.
\newblock Eigenvalue bounds for double saddle-point systems.
\newblock https://arxiv.org/abs/2110.13328, 2021.

\bibitem{eg15}
R.~Estrin and C.~Greif.
\newblock On nonsingular saddle-point systems with a maximally rank deficient
  leading block.
\newblock {\em SIAM Journal on Matrix Analysis and Applications},
  36(2):367--384, 2015.

\bibitem{eg16}
R.~Estrin and C.~Greif.
\newblock Towards an optimal condition number of certain augmented
  {L}agrangian-type saddle-point matrices.
\newblock {\em Numerical Linear Algebra with Applications}, 23(4):693--705,
  2016.

\bibitem{f75}
R.~Fletcher.
\newblock {An Ideal Penalty Function for Constrained Optimization}.
\newblock {\em IMA Journal of Applied Mathematics}, 15(3):319--342, 06 1975.

\bibitem{gg03}
G.~H. Golub and C.~Greif.
\newblock On solving block-structured indefinite linear systems.
\newblock {\em SIAM J. Sci. Comput.}, 24(6):2076--2092, 2003.

\bibitem{gs07}
C.~Greif and D.~Sch{\"o}tzau.
\newblock Preconditioners for the discretized time-harmonic {M}axwell equations
  in mixed form.
\newblock {\em Numer. Linear Algebra Appl.}, 14(4):281--297, 2007.

\bibitem{m92}
S.~Mehrotra.
\newblock On the implementation of a primal-dual interior point method.
\newblock {\em SIAM Journal on Optimization}, 2(4):575--601, 1992.

\bibitem{m03}
P.~Monk.
\newblock {\em Finite element methods for {M}axwell's equations}.
\newblock Numerical Mathematics and Scientific Computation. Oxford University
  Press, New York, 2003.

\bibitem{nw06}
J.~Nocedal and S.~J. Wright.
\newblock {\em Numerical optimization}.
\newblock Springer Series in Operations Research and Financial Engineering.
  Springer, New York, second edition, 2006.

\bibitem{rst18}
D.~Ruiz, A.~Sartenaer, and C.~Tannier.
\newblock Refining the lower bound on the positive eigenvalues of saddle point
  matrices with insights on the interactions between the blocks.
\newblock {\em SIAM Journal on Matrix Analysis and Applications},
  39(2):712--736, 2018.

\bibitem{rw92}
T.~Rusten and R.~Winther.
\newblock A preconditioned iterative method for saddlepoint problems.
\newblock {\em SIAM J. Matrix Anal. Appl.}, 13:887--904, 1992.

\end{thebibliography}
	
\end{document}